\documentclass[12pt,reqno]{amsart}

\usepackage{times}
\usepackage{graphicx}
\graphicspath{ {./images/} }

\vfuzz=2pt

\usepackage{amsmath,amsthm,amsfonts,amscd,amssymb}
\usepackage{pst-node}
\usepackage{pst-plot}
\usepackage[all]{xy}
\usepackage{stackengine}

\usepackage{tikz, braids}
\usetikzlibrary{decorations.pathreplacing, arrows}

\newtheorem{theorem}{Theorem}[section]
\newtheorem{corollary}[theorem]{Corollary}

\newtheorem{lemma}[theorem]{Lemma}
\newtheorem{remark}[theorem]{Remark}
\newtheorem{proposition}[theorem]{Proposition}

\numberwithin{theorem}{section}
\numberwithin{equation}{section}






\newcommand{\norm}[1]{\left\Vert#1\right\Vert}

\newcommand{\la}{\langle}
\newcommand{\ra}{\rangle}

\newcommand{\Comp}{\mathbb{C}}

\newcommand{\g}{\mathbb{G}}

\newcommand{\M}{\mathcal{M}}
\newcommand{\n}{\mathbb{N}}
\newcommand{\re}{\mathbb{R}}

\newcommand{\z}{\mathbb{Z}}

\global\long\def\tp{\mathop{\xymatrix{*+<.7ex>[o][F-]{\scriptstyle \top}}
 } }


\begin{document}

\title[{\tiny A Khintchine inequality for central Fourier series on non-Kac CQGs}]{A Khintchine inequality for central Fourier series on non-Kac compact quantum groups}

\author[Sang-Gyun Youn]{Sang-Gyun Youn}
\address{Sang-Gyun Youn, 
Department of Mathematics Education, Seoul National University, 
Gwanak-Ro 1, Gwanak-Gu, Seoul 08826, Republic of Korea}
\email{s.youn@snu.ac.kr }

\maketitle

\begin{abstract}
The study of Khintchin inequalities has a long history in abstract harmonic analysis. While there is almost no possibility of non-trivial Khintchine inequality for central Fourier series on compact connected semisimple Lie groups, we demonstrate a strong contrast within the framework of compact quantum groups. Specifically, we establish a Khintchine inequality with operator coefficients for arbitrary central Fourier series in a large class of non-Kac compact quantum groups. The main examples include the Drinfeld-Jimbo $q$-deformations $G_q$, the free orthogonal quantum groups $O_F^+$, and the quantum automorphism group $\g_{aut}(B,\psi)$ with a $\delta$-form $\psi$.
\end{abstract}

\section{Introduction}

The Khintchine inequality has a long tradition in harmonic analysis, and its natural analogue has been studied extensively in noncommutative harmonic analysis \cite{Lu86,LuPi91,Pi12}. A noncommutative Khintchine inequality with operator coefficients for the cases $2\leq p<\infty$ is given by \small
\begin{equation}\label{eq-Khintchine-original}
 \norm{\sum_{i=1}^m \epsilon_i\otimes A_i}_{L^p(\Omega;S^p_n)}\lesssim \max\left\{ \norm{\left ( \sum_{i=1}^m A_i^*A_i \right )^{\frac{1}{2}}}_{S^p_n}, \norm{\left ( \sum_{i=1}^m A_iA_i^* \right )^{\frac{1}{2}}}_{S^p_n} \right\}
\end{equation}
\normalsize where $(\epsilon_i)_{i=1}^m$ is a set of independent and identically distributed Rademacher variables with $\epsilon_i:\Omega\rightarrow \left\{\pm 1\right\}$, and $S^p_n$ is the matrix algebra $M_n$ with the Schatten $p$-norm. When comparing two norms $\norm{\cdot}$ and $\norm{\cdot}'$, let us denote by $\norm{f}\lesssim \norm{f}'$ if there exists a constant $C$ such that
\begin{equation}
    \norm{f}\leq C\norm{f}'
\end{equation}
where $C$ is independent of the choice of $f$. Additionally, we write $\norm{\cdot}\approx \norm{\cdot}'$ if both $\norm{\cdot}\lesssim \norm{\cdot}'$ and $ \norm{\cdot}'\lesssim \norm{\cdot}$ hold.

Without using independent random variables, one of the most successful approaches has been the study of {\it lacunary sets}. This subject has a long and rich history for compact abelian groups \cite{GrHa13,Ka85,LoRo75,MaPi81}. Let us denote by $\text{Irr}(G)$ the set of all irreducible unitary representations $\pi:G\rightarrow \mathcal{U}(n_{\pi})$ of a compact group $G$. In the category of compact abelian groups, all irreducible representations are one-dimensional, i.e. $n_{\pi}\equiv 1$, and $\left\{\pi\right\}_{\pi\in \text{Irr}(G)}$ forms an orthonormal basis of $L^2(G)$. Moreover, there exists an infinite subset $E\subseteq \text{Irr}(G)$ such that \small
\begin{equation}
    \norm{\sum_{\pi \in E} \pi \otimes A_{\pi}}_{L^p(G;S^p_n)}\lesssim \max\left\{ \norm{\left ( \sum_{\pi\in E} A_{\pi}^*A_{\pi} \right )^{\frac{1}{2}}}_{S^p_n}, \norm{\left ( \sum_{\pi\in E} A_{\pi}A_{\pi}^* \right )^{\frac{1}{2}}}_{S^p_n} \right\}
\end{equation}
\normalsize by \cite{LoRo75} and \cite{Har99}.

There have been two general frameworks to study such infinite subsets $E\subseteq \text{Irr}(G)$ for non-abelian $G$, as mentioned in \cite{Pi20} for Sidon sets.

(1) One approach is to replace the compact abelian groups $G$ with the duals $\widehat{\Gamma}$ of discrete groups. This approach has led to various astounding positive results. For example, {\it freely independent} noncommutative random variables can be used to obtain Khintchine inequalities \cite{Bu99,HaPi93,PaPi05,Jun05,RiXu06}. For example, let $2\leq p\leq \infty$ and $E=\left\{g_i\right\}_{i=1}^m$ be a set of canonical generators of $\mathbb{F}_m$. Then\small
\begin{equation}\label{eq-Haagerup}
 \norm{\sum_{x\in E} \lambda_{x} \otimes A_x}_{L^p(\widehat{\mathbb{F}_m}; S^p_n)}\lesssim \max\left\{ \norm{\left ( \sum_{x\in E} A_x^*A_x \right )^{\frac{1}{2}}}_{S^p_n}, \norm{\left ( \sum_{x\in E} A_xA_x^* \right )^{\frac{1}{2}}}_{S^p_n} \right\}.
\end{equation}
\normalsize Here, $L^p(\widehat{\mathbb{F}_m}; S^p_n)$ is the noncommutative $L^p$-space from the tensor product von Neumann algebra $M_n \overline{\otimes}\mathcal{L}(\mathbb{F}_m)$ with the noncommutative measure $\text{tr}_n\otimes \tau$, where $\text{tr}_n$ is the usual trace on $M_n$ and $\tau= \la \cdot \delta_e,\delta_e\ra_{\ell^2(\mathbb{F}_m)}$ is the vacuum state on the free group von Neumann algebra $\mathcal{L}(\mathbb{F}_m)$. 

 (2) The other approach is to replace compact abelian groups with compact non-abelian groups $G$. In this case, the one-dimensional representations $\pi$ are replaced by matrix coefficient functions $\sqrt{n_{\pi}}\pi_{ij}$. However, this line of research encountered some disappointing discoveries for compact non-abelian Lie groups. For example, it was proven in \cite{GT80} that compact connected semisimple Lie groups do not admit an infinite $\Lambda(p)$-set, even in the scalar-valued case. Specifically, for any $1<p<\infty$, there is no infinite subset $E\subseteq \text{Irr}(G)$ such that
 \begin{equation}
 \norm{\sum_{\pi\in E}\sum_{i,j=1}^{n_{\pi}}a^{\pi}_{ij} \sqrt{n_{\pi}} \pi_{ij}}_{L^p(G)}\lesssim \left ( \sum_{\pi\in E}\sum_{i,j=1}^{n_{\pi}} |a^{\pi}_{ij}|^2 \right )^{\frac{1}{2}}.
 \end{equation}
One might hope to explore new possibilities by focusing on the {\it characters} $\chi_{\pi}=\sum_{i=1}^{n_{\pi}}\pi_{ii}$. However, it was shown in \cite{GST82} that for any $3\leq p<\infty$ and for any compact connected semisimple Lie groups $G$, there is no infinite subset $E\subseteq \text{Irr}(G)$ satisfying \small
\begin{equation}
    \norm{\sum_{\pi\in E} a_{\pi}\chi_{\pi} }_{L^p(G)}\lesssim \left ( \sum_{\pi\in E} |a_{\pi}|^2 \right )^{\frac{1}{2}}.
\end{equation}
\normalsize  

Despite the aforementioned negative conclusions in the category of compact non-abelian Lie groups, a positive result was found for a {\it twisted quantum group $\g=SU_q(2)$} with $0<q<1$. Specifically, \cite{Wa17} constructed an infinite subset $E\subseteq \text{Irr}(SU_q(2))$ such that
\begin{equation}
    \norm{\sum_{\pi\in E} a_{\pi}\chi_{\pi} }_{L^4(SU_q(2))}\lesssim \left ( \sum_{\pi\in E} |a_{\pi}|^2 \right )^{\frac{1}{2}}.
\end{equation}
This finding suggests potential for further exploration of the Khintchine inequality using characters on the {\it Drinfeld-Jimbo $q$-deformations} $\g=G_q$, and more generally on {\it non-Kac compact quantum groups}.

The main aim of this paper is to establish a very positive result in a large class of non-Kac compact quantum groups. Let $L^p(\g;S^p_n)$ denote the vector-valued noncommutative $L^p$-space $S^p_n[L^p(\g)]$ \cite{Pi98b,Pi03}. In Section 3, we prove the following Khintcine inequality with operator coefficients on general compact quantum groups (Theorem \ref{thm-main}): 
\begin{equation}\label{eq01}
   \norm{\sum_{\alpha \in \text{Irr}(\g)} \chi_{\alpha} \otimes  A_{\alpha}}_{L^p(\g; S^p_n)} \leq K_p\norm{\left (\sum_{\alpha\in \text{Irr}(\g)} A_{\alpha}^*A_{\alpha}\right )^{\frac{1}{2}}}_{S^p_n}
\end{equation}
for all $p=2^k$ with natural numbers $k$, where the constant $K_p$ is given by
\begin{equation}\label{eq02}
    K_p=\left ( \sum_{\alpha\in \text{Irr}(\g)} \norm{\chi_{\alpha}}_{L^{\infty}(\g)}^{2-\frac{4}{p}}  \left ( \frac{n_{\alpha}}{d_{\alpha}} \right )^{\frac{2}{p}} \right )^{\frac{1}{2}}\in [1,\infty].
\end{equation} 
Note that the whole set $E=\text{Irr}(\g)$ is taken to cover {\it all central Fourier series} $f\sim \sum_{\alpha\in \text{Irr}(\g)}\chi_{\alpha}\otimes A_{\alpha}$ with operator coefficients.

In Section 4, we prove that $K_p<\infty$ for all $2<p<\infty$ in a wide range of non-Kac compact quantum groups, including the Drinfeld-Jimbo $q$-deformations $G_q$, non-Kac free orthogonal quantum groups $O_F^+$ and non-Kac quantum automorphism groups $\g_{aut}(B,\psi)$ with a $\delta$-form $\psi$. In these cases, for any $1\leq r,s<\infty$, we obtain 
\begin{equation}
\norm{f}_{L^r(\g)}\approx \norm{f}_{L^s(\g)} 
\end{equation}
for arbitrary central Fourier series $f\sim \sum_{\alpha\in \text{Irr}(\g)}a_{\alpha}\chi_{\alpha}$ with scalar coefficients.

\section{Preliminaries}	

\subsection{Compact quantum groups and representations}

Within the von Neumann algebraic framework \cite{KuVa00,Va01,KuVa03}, a {\it compact quantum group} $\g$ is given by a triple $(L^{\infty}(\g),\Delta,h)$ where 
\begin{enumerate}
    \item $L^{\infty}(\g)$ is a von Neumann algebra, 
    \item $\Delta:L^{\infty}(\g)\rightarrow L^{\infty}(\g)\overline{\otimes}L^{\infty}(\g)$ is a normal unital $*$-homomorphism satisfying
\begin{equation}
\displaystyle (\Delta\otimes \text{id})\Delta= (\text{id}\otimes \Delta)\Delta,
\end{equation}
\item $h:L^{\infty}(\g)\rightarrow \Comp$ is a normal faithful state satisfying
\begin{equation}
(\text{id}\otimes h)(\Delta(a))=h(a)1=(h\otimes \text{id})(\Delta(a))
\end{equation}
for all $a\in L^{\infty}(\g)$. We call $h$ the {\it Haar state} on $\g$.
\end{enumerate}

A (finite dimensional) {\it unitary representation} of $\g$ is given by a unitary $v=\displaystyle \sum_{i,j=1}^{n_{v}} e_{ij}\otimes v_{ij}\in M_{n_v}\otimes L^{\infty}(\g)$ such that
\begin{equation}
\Delta(v_{ij})=\sum_{k=1}^{n_v}v_{ik}\otimes v_{kj}
\end{equation}
for all $1\leq i,j\leq n_v$. Furthermore, $v$ is called {\it irreducible} if 
\begin{equation}
\left \{T\in M_{n_v}: (T\otimes 1)v = v(T\otimes 1)\right\}=\left \{\lambda \cdot \text{Id}_{n_v}: \lambda\in \Comp\right\}.
\end{equation}

Two unitary representations $v,w\in M_n\otimes L^{\infty}(\g)$ are called {\it unitarily equivalent} if there exists a unitary matrix $U\in M_n$ such that
\begin{equation}
    (U\otimes 1)v(U^*\otimes 1)=w,
\end{equation}
and we write $v\cong w$ in this case. We denote by $\text{Irr}(\g)$ the set of all irreducible unitary representations up to the unitary equivalence. For each equivalence class $\alpha\in \text{Irr}(\g)$, let us take a representative element as
\begin{equation}
u^{\alpha}=\displaystyle \sum_{i,j=1}^{n_{\alpha}}e_{ij}\otimes u^{\alpha}_{ij}\in M_{n_{\alpha}}\otimes L^{\infty}(\g).    
\end{equation}
Here $n_{\alpha}$ is called the {\it classical dimension} of $u^{\alpha}$. An important distinction from the case of compact groups is that the {\it conjugate representation} 
\begin{equation}
(u^{\alpha})^c=\displaystyle \sum_{i,j=1}^{n_{\alpha}}e_{ij}\otimes \left (u^{\alpha}_{ij}\right )^*\in M_{n_{\alpha}}\otimes L^{\infty}(\g)   
\end{equation}
is not necessarily a unitary, but there exists an invertible positive matrix $Q_{\alpha}\in M_{n_{\alpha}}$ such that 
\begin{equation}
u^{\overline{\alpha}}=(Q_{\alpha}^{\frac{1}{2}}\otimes 1)(u^{\alpha})^c (Q_{\alpha}^{-\frac{1}{2}}\otimes 1)
\end{equation}
is a unitary representation. Here, we may assume that $Q_{\alpha}$ is diagonal with a suitable choice of an orthonormal basis. Moreover, the matrix $Q_{\alpha}$ is uniquely determined with the condition $\text{Tr}(Q_{\alpha})=\text{Tr}(Q_{\alpha}^{-1})$. We write
\begin{equation}
d_{\alpha}=\text{Tr}(Q_{\alpha})=\text{Tr}(Q_{\alpha}^{-1}),
\end{equation}
and call $d_{\alpha}$ the {\it quantum dimension} of $u^{\alpha}$. A compact quantum group $\g$ is called {\it Kac type} if one of the following equivalent conditions holds:
\begin{itemize}
    \item $Q_{\alpha}=\text{Id}_{n_{\alpha}}$ for all $\alpha\in \text{Irr}(\g)$.
    \item $d_{\alpha}=n_{\alpha}$ for all $\alpha\in \text{Irr}(\g)$.
    \item the Haar state $h$ is tracial, i.e. $h(ab) = h(ba)$ for all $a,b\in L^{\infty}(\g)$.
\end{itemize}

Let $\displaystyle v=\sum_{i,j=1}^{m}e_{ij}\otimes v_{ij}\in M_{m}\otimes L^{\infty}(\g)$ and $\displaystyle w=\sum_{s,t=1}^{n}e_{st}\otimes w_{st}\in M_{n}\otimes L^{\infty}(\g)$ be representations of $\g$. The {\it direct sum} is defined by
\begin{align}
v\oplus w & = \left [ \begin{array}{cccccc}
v_{11}&\cdots&v_{1m}&&&\\
\vdots&\ddots&\vdots&&&\\
v_{m1}&\cdots&v_{mm}&&&\\
&&&w_{11}&\cdots&w_{1n}\\
&&&\vdots&\ddots&\vdots\\
&&&w_{n1}&\cdots&w_{nn}
\end{array}\right ] 
\end{align}
as an element in $M_{m+n}\otimes L^{\infty}(\g)$, and the {\it tensor product} is defined by
\begin{align}
v\tp w & = \sum_{i,j=1}^{m}\sum_{s,t=1}^{n} e_{ij}\otimes e_{st}\otimes v_{ij}w_{st}
\end{align}
as an element in $M_{m}\otimes M_{n}\otimes L^{\infty}(\g)$. Both the direct sum $v\oplus w$ and the tensor product $v\tp w$ are representations of $\g$. For any finite-dimensional unitary representation $u$ of $\g$, there exist $\gamma_1$, $\gamma_2$, $\cdots$, $\gamma_m\in \text{Irr}(\g)$ such that 
\begin{equation}
u \cong u^{\gamma_1}\oplus u^{\gamma_2}\oplus \cdots \oplus u^{\gamma_m}.
\end{equation}
In this case, we call $\gamma_k$ a component of irreducible decomposition of $u$ and write $\gamma_k\subseteq u$.

\subsection{Schur's orthogonality and Fourier series}

The space of polynomials 
\begin{equation}
\text{Pol}(\g)=\text{span}\left \{u^{\alpha}_{ij}: \alpha\in \text{Irr}(\g),1\leq i,j\leq n_{\alpha}\right\}    
\end{equation}
is a weak$*$-dense $*$-subalgebra of $L^{\infty}(\g)$. The Schur's orthogonality relation states that
\begin{align}
\label{Schur1}
h\left ((u^{\alpha}_{ij})^* u^{\beta}_{st}\right )&=\frac{\delta_{\alpha,\beta}\delta_{i,s}\delta_{j,t}(Q_{\alpha})_{ii}^{-1}}{d_{\alpha}}\\
\label{Schur2}
h\left (u^{\alpha}_{ij}  (u^{\beta}_{st})^*\right )&=\frac{\delta_{\alpha,\beta}\delta_{i,s}\delta_{j,t}(Q_{\alpha})_{jj}}{d_{\alpha}}
\end{align}
for all $\alpha,\beta\in \text{Irr}(\g)$, $1\leq i,j\leq n_{\alpha}$ and $1\leq s,t\leq n_{\beta}$. We denote by $L^2(\g)$ the completion of $\text{Pol}(\g)$ with respect to the inner product
\begin{equation}
\la f,g\ra = h(g^*f)
\end{equation}
for all $f,g\in \text{Pol}(\g)$. Then \eqref{Schur1} implies that 
\begin{equation}
\left \{\sqrt{d_{\alpha}(Q_{\alpha})_{ii}}u^{\alpha}_{ij}: \alpha\in \text{Irr}(\g),1\leq i,j\leq n_{\alpha}\right\}    
\end{equation}
is an orthonormal basis of $L^2(\g)$. In particular, the set of all {\it characters} $\left\{\chi_{\alpha}\right\}_{\alpha\in \text{Irr}(\g)}$ is an orthonormal subset in $L^2(\g)$, where $\chi_{\alpha}=\sum_{i=1}^{n_{\alpha}}u^{\alpha}_{ii}$.

While $h(fg)\neq h(gf)$ on non-Kac $\g$ in general, the (non-trivial) {\it modular automorphism group} $\sigma=(\sigma_z)_{z\in \Comp}$ of the Haar state $h$ explains how those quantities are related. The automorphism $\sigma_z$ is determined by
\begin{equation}
\sigma_z(u^{\alpha}_{st})=(Q_{\alpha})_{ss}^{iz}(Q_{\alpha})_{tt}^{iz}u^{\alpha}_{st}
\end{equation}
for all $\alpha\in \text{Irr}(\g)$ and $1\leq s,t\leq n_{\alpha}$, and we have 
\begin{equation}
    h(fg)=h(g\sigma_{-i}(f))
\end{equation}
for all $f,g\in \text{Pol}(\g)$.

We define the {\it noncommutative $L^1$-space} on $\g$ as the predual $L^1(\g)=L^{\infty}(\g)_*$. The {\it Fourier coefficient} of a normal bounded linear functional $\omega\in L^{1}(\g)$ at $\alpha\in \text{Irr}(\g)$ is given by $\widehat{\omega}(\alpha)\in M_{n_{\alpha}}$ whose entries are
\begin{equation}
    \widehat{\omega}(\alpha)_{ij}=\omega \left ( (u^{\alpha}_{ji})^* \right )
\end{equation}
for all $1\leq i,j\leq n_{\alpha}$, and we call
\begin{equation}
    \omega\sim \sum_{\alpha\in \text{Irr}(\g)}\sum_{i,j=1}^{n_{\alpha}}d_{\alpha}(\widehat{\omega}(\alpha)Q_{\alpha})_{ij}u^{\alpha}_{ji}
\end{equation}
the {\it Fourier series} of $\omega\in L^1(\g)$. Note that we have a natural contractive embedding $\iota: L^{\infty}(\g)\hookrightarrow L^1(\g)$ given by 
\begin{equation}\label{eq10}
\iota(a)=\omega_a(\cdot)=h(\cdot a)
\end{equation}
for all $a\in L^{\infty}(\g)$. When we identify $f\in \text{Pol}(\g)\subseteq L^{\infty}(\g)$ with the associated normal bounded functional $\omega_f=h(\cdot f )\in L^1(\g)$, we obtain
\begin{equation}
    f=\sum_{\alpha\in \text{Irr}(\g)}\sum_{i,j=1}^{n_{\alpha}}d_{\alpha}(\widehat{f}(\alpha)Q_{\alpha})_{ij}u^{\alpha}_{ji},
\end{equation}
where the Fourier coefficient $\widehat{f}(\alpha)$ is defined as $\widehat{\omega_f}(\alpha)$.

\subsection{Noncommutative $L^p$-spaces and the crossed product}

For a general description, let $\M\subseteq B(H)$ be a von Neumann algebra with a normal faithful positive linear functional $\phi\in \M_*^+$. See \cite{Te81,PiXu03,HJX10} for more details of noncommutative $L^p$-spaces. Since $\M$ is naturally embedded into $\M_*$ similarly as in \eqref{eq10}, we have a compatible pair $(\M,\M_*)$ of Banach spaces, and the {\it noncommutative $L^p$-space} is defined as the complex interpolation space
\begin{equation}
    L^p(\M,\phi)=(\M,\M_*)_{\frac{1}{p}}
\end{equation}
for all $1\leq p\leq \infty$. 

Let $1\leq p_0,p_1\leq \infty$ and $\theta\in [0,1]$ such that $\displaystyle \frac{1}{p}=\frac{1-\theta}{p_0}+\frac{\theta}{p_1}$. Then 
\begin{equation}\label{ineq20}
\norm{f}_{L^p(\M,\phi)}\leq \norm{f}_{L^{p_0}(\M,\phi)}^{1-\theta}\norm{f}_{L^{p_1}(\M,\phi)}^{\theta}
\end{equation}
for all $f\in \M$.

Although the normal faithful positive functional $\phi\in \M_*^+$ is non-tracial in general, the elements in $L^p(\M,\phi)$ can be realized concretely in a larger noncommutative {\it tracial} measure space. Using the modular automorphism $\sigma^{\phi}=(\sigma_t^{\phi})_{t\in \mathbb{R}}$, the {\it crossed product} 
\begin{equation}
\mathcal{R}=\M\rtimes_{\sigma^{\phi}}\mathbb{R}\subseteq B(L^2(\re,H))    
\end{equation}
is defined as the von Neumann algebra generated by two families of operators, $\left\{\pi(x)\right\}_{x\in \M}$ and $ \left\{\lambda_s\right\}_{s\in \re}$ given by
\begin{align}
    (\pi(x)\xi)(t)&=\sigma_{-t}^{\phi}(x)\xi(t)\\
    (\lambda_s\xi)(t)&=\xi(t-s)
\end{align}
for all $x\in \M$ and $s,t\in \re$. Here, $\pi:\M\hookrightarrow \mathcal{R}$ is a normal faithful $*$-representation, so we can identify $\M$ with $\pi(\M)$.

By the Stone's theorem, there exists a densely defined injective positive operator $D=e^{A}:\mathcal{D}(A)\rightarrow L^2(\re,H)$ such that $\lambda(t)=D^{it}$ for all $t\in \re$, where $\mathcal{D}(A)$ is the set of all $\xi\in L^2(\re,H)$ such that 
\begin{equation}
A\xi=\lim_{s\rightarrow 0}\frac{1}{is}(\lambda_s\xi -\xi)   
\end{equation}
exists in $L^2(\re,H)$.




For any compactly supported weak$*$-continuous function $f:\re\rightarrow \M$, let us consider the following operator
\begin{equation}
x_f=\int_{\re}\pi(f(s))\lambda_s ds.
\end{equation}
The set of all such operators $x_f$ forms a weak$*$-dense $*$-subalgebra of $\mathcal{R}$. 

For each normal semifinite weight $\varphi:\M_+\rightarrow [0,\infty]$, there exists the dual normal semifinite weight $\widehat{\varphi}:\mathcal{R}_+\rightarrow [0,\infty]$ determined by
\begin{equation}
  \widehat{\varphi}(x_f) = \widehat{\varphi}\left ( \int_{\re}\pi(f(s))\lambda_s ds \right )=\varphi(f(0))
\end{equation}
for any positive $x_f$. In addition, the dual weight $\widehat{\varphi}$ is faithful if the given weight $\varphi$ is faithful. 

Although $\widehat{\phi}$ of the given faithful $\phi\in \M_*^+$ is non-tracial in general, the following weight
\begin{equation}
\tau(\cdot)=\widehat{\phi}(D^{-\frac{1}{2}} \cdot D^{-\frac{1}{2}} )
\end{equation}
is the unique normal semifinite faithful tracial weight on $\mathcal{R}$ satisfying
\begin{equation}
   \sigma_t^{\widehat{\phi}}(x)=\lambda_t \sigma_t^{\tau}(x) \lambda_t^{-1}=D^{it}xD^{-it},~t\in \re.
\end{equation}

Let $L^0(\mathcal{R},\tau)$ be the space of all closed densely defined operators $T$ on $L^2(\re,H)$ satisfying the following conditions:
\begin{enumerate}
    \item $T$ is affiliated with $\mathcal{R}$, i.e. $TV=VT$ for all unitaries $V$ in the commutant algebra $\mathcal{R}'$.
    \item $T$ is $\tau$-measurable, i.e. for any $\delta>0$ there exists a projection $p\in \mathcal{R}$ such that $\text{ran}(p)\subseteq \mathcal{D}(T)$ and $\tau(1-p)\leq \delta$.
\end{enumerate}

For any normal positive functional $\omega\in \M_*^+$, there exists a unique positive $D_{\omega}\in L^0(\mathcal{R},\tau)$ such that
\begin{equation}
\widehat{\omega}(x)=\tau(D_{\omega}^{\frac{1}{2}} x D_{\omega}^{\frac{1}{2}} )=\tau(x D_{\omega})
\end{equation}
for all $x\in \mathcal{R}_+$, and the modular automorphism of $\widehat{\omega}$ is given by $\sigma^{\widehat{\omega}}_t(x)=D_{\omega}^{it}xD_{\omega}^{-it}$. In particular,  we have $D_{\phi}=D=e^A$ for the given normal faithful positive functional $\phi\in \M_*^+$. See Lemma 2.2 and Corollary 2.6 of \cite{Te81} for more details.

\subsection{Dual action and Haagerup $L^p$-spaces}

To define {\it the Haagerup $L^p$-spaces} $L^{p,H}(\M,\phi)$, let us introduce the dual action $\widehat{\sigma}^{\phi}$ of $\widehat{\re}\cong \re$ on $\mathcal{R}$ using a canonical identification of the dual group $\widehat{\re}$ with $\re$. Specifically, the dual action $\widehat{\sigma}^{\phi}$ is given by 
\begin{equation}
    \widehat{\sigma}^{\phi}_s(x)=w_sxw_s^*
\end{equation}
for all $s\in \re$ and $x\in \mathcal{R}$, where the unitary representation $w:\widehat{\re}\cong \re\rightarrow \mathcal{U}(L^2(\re,H))$ is given by
\begin{equation}
    (w_s\xi)(t)=e^{-ist}\xi(t)
\end{equation}
for all $\xi\in L^2(\re,H)$. In particular, we have
\begin{align}
\widehat{\sigma}^{\phi}_s(\pi(a)\lambda_t)=e^{-ist}\pi(a)\lambda_t
\end{align}
for all $a\in \M$ and $s,t\in \re$. The subalgebra $\pi(\M)$ is characterized as the fixed points for the dual action inside $\mathcal{R}$, i.e.
\begin{equation}
    \pi(\M)=\left\{ x\in \mathcal{R}:~\widehat{\sigma}^{\phi}_s(x)=x~\forall s\in \re \right\}.
\end{equation}

The Haagerup $L^p$-space is defined as
\begin{equation}
    L^{p,H}(\M,\phi)=\left\{x\in L^0(\mathcal{R},\tau): \widehat{\sigma}^{\phi}_s(x)=e^{-\frac{s}{p}}x~\forall s\in \re\right\}
\end{equation}
for any $1\leq p \leq \infty$. In particular, we have $L^{\infty,H}(\M,\phi)=\pi(\M)$ for $p=\infty$, and
\begin{equation}\label{eq21}
    L^{1,H}(\M,\phi)=\text{span}\left\{D_{\omega}\in L^0(\mathcal{R},\tau): \omega\in \M_*^+\right\}
\end{equation}
for $p=1$ where $D_{\omega}$ is the unique positive $\tau$-measurable operator associated to $\omega\in \M_*^+$ discussed in Subsection 2.3. Here, \eqref{eq21} is thanks to the following identities $\widehat{\omega}=\widehat{\omega}\circ \widehat{\sigma}^{\phi}_t$ and $e^{-t}\tau=\tau\circ \widehat{\sigma}_t^{\phi}$.

Note that $\text{Tr}(D_{\omega})=\omega(1)$ extends to a contractive positive normal linear functional on $L^{1,H}(\M,\psi)$, which we call {\it trace}. If $x\in L^{p,H}(\M,\phi)$ and $y\in L^{q,H}(\M,\phi)$ such that $\frac{1}{p}+\frac{1}{q}=1$ with $p,q\geq 1$, then $xy,yx\in L^{1,H}(\M,\phi)$ and we have
\begin{equation}\label{traciality}
   \text{Tr}\left ( xy \right ) = \text{Tr}\left ( yx\right ) .
\end{equation}

Now we can explain how the complex interpolation space $L^p(\M,\phi)$ is identified with the Haagerup $L^p$-space $L^{p,H}(\M,\phi)$. Let $x=u|x|$ be the polar decomposition of $x\in L^0(\mathcal{R},\tau)$. Then $x\in L^{p,H}(\M,\phi)$ is equivalent to that $u\in \M$ and $|x|^p\in L^{1,H}(\M,\phi)$. In this case, the $L^p$-norm of $x\in L^{p,H}(\M,\phi)$ is defined by
\begin{equation}
    \norm{x}_{p,H}=\text{Tr}(|x|^p)^{\frac{1}{p}}.
\end{equation}
By \cite[Section 9]{Kos84}, a linear map $\iota_p:\M\hookrightarrow L^{p,H}(\M,\phi)$ given by 
\begin{equation}
\iota_p(x)=\pi(x)D^{\frac{1}{p}}
\end{equation}
extends to an onto isometry $\iota_p:L^p(\M,\phi)\rightarrow L^{p,H}(\M,\phi)$ for all $1\leq p\leq \infty$.

As in the tracial setting, we have a natural H{\"o}lder inequality on the Haagerup $L^p$-spaces. For any $x\in L^{p,H}(\M,\phi)$ and $y\in L^{q,H}(\M,\phi)$ such that $\frac{1}{p}+\frac{1}{q}=\frac{1}{r}$ with $r\geq 1$, we have $xy\in L^{r,H}(\M,\phi)$ and
\begin{equation}\label{Holder}
\norm{xy}_{L^{r,H}(\M,\phi)}\leq \norm{x}_{L^{p,H}(\M,\phi)}\cdot \norm{y}_{L^{q,H}(\M,\phi)}.
\end{equation}

We define the noncommutative vector-valued $L^1$-space as $S^1_n[\M_*]=S^1_n\widehat{\otimes}\M_*\cong (M_n\otimes_{\text{min}} \M)_*$ where $\widehat{\otimes}$ and $\otimes_{\text{min}}$ are the projective and the injective tensor product in the category of operator spaces respectively. For the general cases $1\leq p\leq \infty$, the noncommutative vector-valued $L^p$-space is realized as the complex interpolation space
\begin{equation}
    S^p_n[L^p[\M,\phi]]=\left ( M_n\otimes_{\text{min}}\M, S^1_n\widehat{\otimes}\M_* \right )_{\frac{1}{p}}=L^p\left (\M_n,\phi_n\right )
\end{equation}
by \cite[Corollary 1.4]{Pi98b}, where $\M_n=M_n\overline{\otimes} \M$, $\phi_n=\text{tr}_n\otimes \phi$ and $\text{tr}_n$ is the usual trace on $M_n$. Thus, we obtain
\begin{equation}
   \norm{f}_{S^p_n[L^p[\M,\phi]]} = \left [\left (\text{tr}_n\otimes \text{Tr}\right )\left (\left |(\text{id}\otimes \pi)(f)(\text{Id}_n \otimes D^{\frac{1}{p}})\right |^p \right ) \right ]^{\frac{1}{p}}
\end{equation}
for any $f\in M_n\otimes_{\text{min}} \M$ and $1\leq p<\infty$. See \cite{Pi98b,Pi03} for more details of the noncommutative vector-valued $L^p$-spaces.





\section{A Khintchine inequality for central Fourier series}

We will focus on the so-called {\it central Fourier series with operator coefficients}, which is of the form
\begin{equation}
    f\sim \sum_{\alpha\in \text{Irr}(\g)}x_{\alpha}\otimes \chi_{\alpha}
\end{equation}
with an arbitrary sequence $(x_{\alpha})_{\alpha\in \text{Irr}(\g)}\subseteq M_n$. A key observation of this paper is the following smoothing effect of the automorphism $\sigma_{-\frac{i}{2p}}$ on the $L^{p}$-norm of the characters $\chi_{\alpha}$.

\begin{lemma}\label{lem-main}
For any $p=2^k$ with a natural number $k$, we have
\begin{equation}\label{eq41}
\norm{\sigma_{-\frac{i}{2p}}(\chi_{\alpha})}_{L^{p}(\g)}\leq \norm{\chi_{\alpha}}_{L^{\infty}(\g)}^{1-\frac{2}{p}}\left ( \frac{n_{\alpha}}{d_{\alpha}} \right )^{\frac{1}{p}}.
\end{equation}
\end{lemma}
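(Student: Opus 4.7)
The plan is to deduce \eqref{eq41} by Stein's complex interpolation theorem applied to the entire analytic family
\[
F(z) := \sigma_{-iz/4}(\chi_\alpha) = \sum_{i=1}^{n_\alpha} (Q_\alpha)_{ii}^{z/2}\, u^\alpha_{ii},\qquad z\in\overline{S}=\{w\in\C:0\le \text{Re}(w)\le 1\},
\]
between an $L^\infty$-endpoint on $\text{Re}(z)=0$ and an $L^2$-endpoint on $\text{Re}(z)=1$. The exponents $1-\tfrac{2}{p}$ on $\norm{\chi_\alpha}_\infty$ and $\tfrac{1}{p}$ on $(n_\alpha/d_\alpha)^{1/2}$ are precisely what interpolation with parameter $\theta=2/p$ produces on the complex scale $(L^\infty(\g),L^2(\g))_\theta=L^{2/\theta}(\g)$, which follows from Kosaki's identification $L^p(\g)=(L^\infty(\g),L^1(\g))_{1/p}$ by reiteration. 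I note in passing that the power-of-two hypothesis on $p$ plays no role in this argument; the same estimate in fact holds for every $2\le p\le\infty$.

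The function $F$ is entire because $\chi_\alpha$ lies in the finite-dimensional subspace $\Span\{u^\alpha_{ii}\}_i\subseteq\Pol(\g)$ of analytic elements, and $F(2/p)=\sigma_{-i/(2p)}(\chi_\alpha)$ is exactly the element to be estimated. Two boundary controls are required. On $\text{Re}(z)=0$ we have $F(it)=\sigma_{t/4}(\chi_\alpha)$; since $\sigma_{t/4}$ is a $*$-automorphism of $L^\infty(\g)$ for real $t$, we have $\norm{F(it)}_{L^\infty(\g)}=\norm{\chi_\alpha}_{L^\infty(\g)}$ uniformly in $t$. On $\text{Re}(z)=1$ we have $F(1+it)=\sigma_{t/4}(\sigma_{-i/4}(\chi_\alpha))$; since $h\circ\sigma_{t/4}=h$ the real modular automorphism is $L^2$-isometric, so $\norm{F(1+it)}_{L^2(\g)}=\norm{\sigma_{-i/4}(\chi_\alpha)}_{L^2(\g)}$.

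The remaining input is a direct base computation of this $L^2$-norm. Since $\sigma_{-i/4}(\chi_\alpha)=\sum_i (Q_\alpha)_{ii}^{1/2}u^\alpha_{ii}$, Schur's orthogonality relation \eqref{Schur1} gives
\[
\norm{\sigma_{-i/4}(\chi_\alpha)}_{L^2(\g)}^2=\sum_{i,j=1}^{n_\alpha}(Q_\alpha)_{ii}^{1/2}(Q_\alpha)_{jj}^{1/2}\,h\bigl((u^\alpha_{ii})^*u^\alpha_{jj}\bigr)=\sum_{i=1}^{n_\alpha}\frac{(Q_\alpha)_{ii}\,(Q_\alpha)_{ii}^{-1}}{d_\alpha}=\frac{n_\alpha}{d_\alpha}.
\]
Feeding the two boundary estimates into Stein's theorem at $\theta=2/p$ then yields $\norm{F(2/p)}_{L^p(\g)}\le \norm{\chi_\alpha}_{L^\infty(\g)}^{1-2/p}(n_\alpha/d_\alpha)^{1/p}$, which is exactly \eqref{eq41}. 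The only mildly subtle point is the validity of complex interpolation in the non-tracial Haagerup setting, but this is now a standard consequence of Kosaki's theorem, so no serious obstacle is expected.
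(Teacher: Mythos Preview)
Your argument is correct and takes a genuinely different route from the paper's own proof. The paper proceeds by induction on $k$: the base case $p=2$ is the same Schur-orthogonality computation you give, and the inductive step passes from $p$ to $2p$ by rewriting $\norm{\sigma_{-i/(4p)}(\chi_\alpha)}_{2p}^{2p}$ in the Haagerup picture as $\mathrm{Tr}\bigl(|D^{1/(4p)}\pi(\chi_\alpha)D^{1/(4p)}|^{2p}\bigr)$, then peeling off one factor of $\pi(\chi_\alpha)^*$ via H\"older and traciality to obtain $\norm{\sigma_{-i/(4p)}(\chi_\alpha)}_{2p}^2\le \norm{\chi_\alpha}_\infty\,\norm{\sigma_{-i/(2p)}(\chi_\alpha)}_p$. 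This doubling argument is the reason the paper restricts to $p=2^k$.

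Your approach replaces the induction by a single application of the three-lines lemma to the entire analytic family $F(z)=\sigma_{-iz/4}(\chi_\alpha)$, interpolating the $L^\infty$-bound on $\mathrm{Re}\,z=0$ (via $\sigma_t$ being a $*$-automorphism) against the $L^2$-bound on $\mathrm{Re}\,z=1$ (via $h\circ\sigma_t=h$). Since $F$ takes values in a fixed finite-dimensional subspace of $\Pol(\g)$ with coefficients bounded on the strip, the admissibility hypotheses are trivially satisfied; Kosaki's identification $L^p(\g)=(L^\infty(\g),L^1(\g))_{1/p}$ together with reiteration gives $(L^\infty,L^2)_{2/p}=L^p$, so the conclusion follows. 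As you note, this yields \eqref{eq41} for every $2\le p\le\infty$, strictly more than the paper's statement. The trade-off is that the paper's argument is entirely elementary within the Haagerup $L^p$-calculus, while yours leans on the reiteration theorem for complex interpolation of Haagerup $L^p$-spaces; this is indeed standard, so the gain in generality comes essentially for free.
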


\begin{proof}
For the first step $k=1$ ($\Leftrightarrow p=2$), we have 
\begin{align}
  \norm{\sigma_{-\frac{i}{4}}(\chi_{\alpha})}_{L^2(\g)}^2&=  \norm{\sum_{j=1}^{n_{\alpha}} (Q_{\alpha})_{jj}^{\frac{1}{2}}u^{\alpha}_{jj}}_{L^2(\g)}^2\\
  =&\sum_{j=1}^{n_{\alpha}}(Q_{\alpha})_{jj}\frac{(Q_{\alpha})_{jj}^{-1}}{d_{\alpha}}=\frac{n_{\alpha}}{d_{\alpha}}=\norm{\chi_{\alpha}}_{\infty}^{0}\cdot \left (\frac{n_{\alpha}}{d_{\alpha}}\right )^{1}.
\end{align}

Now let us assume that \eqref{eq41} holds for $p=2^k$. Then we have
\begin{align}
&\norm{\sigma_{-\frac{i}{4p}}(\chi_{\alpha})}_{L^{2p}(\g)}^{2p}=\norm{\pi\left (\sigma_{-\frac{i}{4p}}(\chi_{\alpha})\right )D^{\frac{1}{2p}}}_{L^{2p,H}(\g)}^{2p}\\
&=\text{Tr}\left (\left |D^{\frac{1}{4p}}\pi(\chi_{\alpha})D^{-\frac{1}{4p}}D^{\frac{1}{2p}}\right |^{2p}\right )=\text{Tr}\left (\left |D^{\frac{1}{4p}}\pi(\chi_{\alpha})D^{\frac{1}{4p}}\right |^{2p}\right )\\
&=\text{Tr}\left ( \left (D^{\frac{1}{4p}}\pi(\chi_{\alpha})^*D^{\frac{1}{2p}}\pi(\chi_{\alpha})D^{\frac{1}{4p}}\right )^{p}\right )\\
&= \text{Tr}\left (D^{\frac{1}{4p}} \left ( \pi(\chi_{\alpha})^*D^{\frac{1}{2p}}\pi(\chi_{\alpha})D^{\frac{1}{2p}} \right )^{p-1} \pi(\chi_{\alpha})^*D^{\frac{1}{2p}}\pi(\chi_{\alpha})D^{\frac{1}{4p}} \right ) \\
&=\text{Tr}\left ( \left (\pi(\chi_{\alpha})^*D^{\frac{1}{2p}}\pi(\chi_{\alpha})D^{\frac{1}{2p}} \right )^{p}\right ).
\end{align}
The last equality is thanks to the traciality \eqref{traciality}. Then, applying the noncommutative H{\" o}lder inequality \eqref{Holder}, we obtain
\begin{align}
&\norm{\sigma_{-\frac{i}{4p}}(\chi_{\alpha})}_{L^{2p}(\g)}^{2}=\left [ \text{Tr}\left ( \left (\pi(\chi_{\alpha})^*D^{\frac{1}{2p}}\pi(\chi_{\alpha})D^{\frac{1}{2p}} \right )^{p}\right ) \right ]^{\frac{1}{p}}\\
&\leq \norm{\pi(\chi_{\alpha})^*}_{L^{\infty,H}(\g)}\cdot \left [ \text{Tr}\left (\left |D^{\frac{1}{2p}}\pi(\chi_{\alpha})D^{\frac{1}{2p}}\right |^{p} \right ) \right ]^{\frac{1}{p}} \\
&= \norm{\pi(\chi_{\alpha}^*)}_{L^{\infty,H}(\g)} \cdot \left [\text{Tr}\left (\left |D^{\frac{1}{2p}}\pi(\chi_{\alpha})D^{-\frac{1}{2p}}D^{\frac{1}{p}}\right |^{p}\right )\right ]^{\frac{1}{p}}\\
&=\norm{\chi_{\alpha}}_{L^{\infty}(\g)}\cdot \norm{\pi\left (\sigma_{-\frac{i}{2p}}(\chi_{\alpha})\right )D^{\frac{1}{p}}}_{L^{p,H}(\g)}\\
&=\norm{\chi_{\alpha}}_{L^{\infty}(\g)}\cdot \norm{\sigma_{-\frac{i}{2p}}(\chi_{\alpha})}_{L^{p}(\g)}.
\end{align}
Thus, we can conclude that
\begin{align}
&\norm{\sigma_{-\frac{i}{4p}}(\chi_{\alpha})}_{L^{2p}(\g)}\leq \norm{\chi_{\alpha}}_{L^{\infty}(\g)}^{\frac{1}{2}}\cdot \norm{\sigma_{-\frac{i}{2p}}(\chi_{\alpha})}_{L^{p}(\g)}^{\frac{1}{2}}\\
&\leq \norm{\chi_{\alpha}}_{L^{\infty}(\g)}^{\frac{1}{2}}\cdot \norm{\chi_{\alpha}}_{L^{\infty}(\g)}^{\frac{1}{2}-\frac{1}{p}}\left ( \frac{n_{\alpha}}{d_{\alpha}} \right )^{\frac{1}{2p}}=\norm{\chi_{\alpha}}_{L^{\infty}(\g)}^{1-\frac{1}{p}}\left ( \frac{n_{\alpha}}{d_{\alpha}} \right )^{\frac{1}{2p}}
\end{align}
where the second inequality comes from the inductive assumption.
\end{proof}

Now, let us exhibit the main result, an operator-valued Khintchine inequality for an arbitrary central Fourier series with the following constant
\begin{equation}\label{coeff}
   K_p= \left (\sum_{\alpha\in \text{Irr}(\g)}  \norm{\chi_{\alpha}}_{L^{\infty}(\g)}^{2-\frac{4}{p}}  \left ( \frac{n_{\alpha}}{d_{\alpha}} \right )^{\frac{2}{p}} \right )^{\frac{1}{2}}\in [1,\infty].
\end{equation}

\begin{theorem}\label{thm-main}
Let $p = 2^k$ with a natural number $k$. Then
\begin{equation}
\norm{f}_{L^p(\g;S^p_n)}\leq K_p \cdot \norm{\left (\sum_{\alpha\in \text{Irr}(\g)} x_{\alpha}^*x_{\alpha} \right )^{\frac{1}{2}}}_{S^p_n}
\end{equation}
for any central Fourier series $f\displaystyle = \sum_{\alpha\in \text{Irr}(\g)} x_{\alpha}\otimes \chi_{\alpha}\in M_n\otimes \text{Pol}(\g)$ with operator coefficients.
\end{theorem}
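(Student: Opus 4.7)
The plan is to mimic the induction on $k$ used in Lemma \ref{lem-main}, lifting the scalar character estimate to the full operator-coefficient Khintchine bound. The base case $k=1$ ($p=2$) is immediate from Schur orthogonality: $\|f\|_{L^2(\g;S^2_n)}^2=\sum_\alpha \mathrm{tr}_n(x_\alpha^*x_\alpha)=\|(\sum x_\alpha^*x_\alpha)^{1/2}\|_{S^2_n}^2$, so the inequality is automatic for $K_2\geq 1$.

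For the inductive step $p\mapsto 2p$, I would pass to the Haagerup picture and establish the key reduction
$$\|f\|_{L^{2p}(\g;S^{2p}_n)}^{2}=\bigl\|\sigma_{-i/(2p)}(f^*f)\bigr\|_{L^{p}(\g;S^{p}_n)}.$$
This is obtained by writing $\|f\|_{2p}^{2p}=(\mathrm{tr}_n\otimes\mathrm{Tr})(|F|^{2p})$ with $F=(\mathrm{id}\otimes\iota_{2p})(f)$, expanding $|F|^{2p}=(F^*F)^p$, and using the intertwining $D^{1/(2p)}\pi(a)D^{1/(2p)}=\iota_p(\sigma_{-i/(2p)}(a))$ together with the traciality of $\mathrm{Tr}$ on the crossed product to identify $F^*F$ with $(\mathrm{id}\otimes\iota_p)(\sigma_{-i/(2p)}(f^*f))$; since $F^*F\geq 0$ and $(\mathrm{id}\otimes\iota_p)$ is isometric, the equality follows. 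Then the fusion rule $\chi_\alpha^*\chi_\beta=\sum_\gamma N_{\bar\alpha\beta}^\gamma\chi_\gamma$ turns $f^*f$ into a central Fourier series $\sum_\gamma y_\gamma\otimes\chi_\gamma$ with $y_\gamma=\sum_{\alpha,\beta}N_{\bar\alpha\beta}^\gamma\, x_\alpha^*x_\beta$, and hence
$$\sigma_{-i/(2p)}(f^*f)=\sum_\gamma y_\gamma\otimes\sigma_{-i/(2p)}(\chi_\gamma).$$

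Applying the triangle inequality in $L^p(\g;S^p_n)$ together with Lemma \ref{lem-main} produces
$$\|f\|_{2p}^{2}\;\leq\;\sum_\gamma\|y_\gamma\|_{S^p_n}\,\|\chi_\gamma\|_{L^\infty(\g)}^{1-2/p}(n_\gamma/d_\gamma)^{1/p}.$$
The last step is a Cauchy--Schwarz split of this sum combined with the fusion-ring identities $\sum_\gamma N_{\bar\alpha\beta}^\gamma d_\gamma=d_\alpha d_\beta$ and $d_{\bar\alpha}=d_\alpha$, which repackages the weighted $\gamma$-sum back into a column-norm expression $K_{2p}^2\|(\sum_\alpha x_\alpha^*x_\alpha)^{1/2}\|_{S^{2p}_n}^2$, completing the induction.

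The main obstacle I expect is this final combinatorial telescoping: the weights $\|\chi_\gamma\|_\infty^{1-2/p}(n_\gamma/d_\gamma)^{1/p}$ indexed by $\gamma\subset\bar\alpha\otimes\beta$ must collapse, via the fusion multiplicities and the quantum-dimension identities, into the exact $K_{2p}^2$ constant while the operator-valued cross terms $x_\alpha^*x_\beta$ reorganize into the column square $\sum_\alpha x_\alpha^*x_\alpha$. The naive Cauchy--Schwarz $\|\sum\|\leq(\sum\|\cdot\|^2)^{1/2}(\sum c_\gamma^2)^{1/2}$ gives a suboptimal constant, so the key technical work is in choosing the correct weighted Cauchy--Schwarz that respects the column Hilbert-module structure governed by the modular automorphism $\sigma_{-i/(2p)}$.
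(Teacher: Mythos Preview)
Your reduction step and the appeal to Lemma~\ref{lem-main} are exactly what the paper does; note, however, that the argument is not genuinely inductive on the theorem itself---you never invoke the hypothesis at level $p$, only Lemma~\ref{lem-main}, so what you have is a single reduction from $\|f\|_{2p}^2$ to $\bigl\|(\mathrm{id}\otimes\sigma_{-i/(2p)})(f^*f)\bigr\|_{L^p(\g;S^p_n)}$ followed by the triangle inequality. After that you correctly arrive at
\[
\|f\|_{2p}^{2}\;\leq\;\sum_\gamma\|y_\gamma\|_{S^p_n}\,\|\chi_\gamma\|_{L^\infty(\g)}^{1-2/p}\Bigl(\frac{n_\gamma}{d_\gamma}\Bigr)^{1/p},
\]
and the only remaining issue is the control of $\|y_\gamma\|_{S^p_n}$.

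The gap you flag is real, and the fusion-identity/Cauchy--Schwarz route you sketch does not close it: a weighted Cauchy--Schwarz on $y_\gamma=\sum_{\alpha,\beta}N_{\bar\alpha\beta}^\gamma\, x_\alpha^*x_\beta$ produces row--column mixtures and an overcount in the multiplicities that do not telescope to the column square with the sharp constant. The paper bypasses the fusion combinatorics entirely by recognizing the \emph{factorization}
\[
y_\gamma \;=\; T^*\,(\mathrm{Id}_n\otimes\chi_\gamma)\,T,\qquad T=\sum_{\alpha}x_\alpha\otimes|\chi_\alpha^*\rangle\colon \mathbb{C}^n\to\mathbb{C}^n\otimes L^2(\g),
\]
where $\chi_\gamma$ acts by left multiplication on $L^2(\g)$. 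Since $\{\chi_\alpha^*\}_\alpha$ is orthonormal in $L^2(\g)$ one has $T^*T=\sum_\alpha x_\alpha^*x_\alpha$, and a single H\"older step gives
\[
\|y_\gamma\|_{S^{p}_n}\;\leq\;\|T\|_{S^{2p}}\,\|\chi_\gamma\|_{L^\infty(\g)}\,\|T\|_{S^{2p}}\;=\;\|\chi_\gamma\|_{L^\infty(\g)}\,\Bigl\|\Bigl(\sum_\alpha x_\alpha^*x_\alpha\Bigr)^{1/2}\Bigr\|_{S^{2p}_n}^{2}.
\]
Substituting this into the $\gamma$-sum yields exactly $K_{2p}^{2}\bigl\|(\sum_\alpha x_\alpha^*x_\alpha)^{1/2}\bigr\|_{S^{2p}_n}^{2}$, with no fusion multiplicities, no quantum-dimension identities, and no Cauchy--Schwarz needed. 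This operator-space factorization of $y_\gamma$ is the missing idea in your proposal.
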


\begin{proof}
The case $k=1$ ($\Leftrightarrow p=2$) is immediate, so let us suppose that $k\geq 2$. Recall that the vector-valued $L^p$-norm of $f=\displaystyle \sum_{\alpha\in \text{Irr}(\g)} x_{\alpha}\otimes  \chi_{\alpha}\in M_n\otimes \text{Pol}(\g)$ is given by 
\begin{align}
&\norm{f}_{L^p(\g;S^p_n)}=\left [\left (\text{tr}_n\otimes \text{Tr}\right )\left (\left |(\text{id}\otimes \pi)(f)(\text{Id}_n\otimes D^{\frac{1}{p}})  \right |^p \right ) \right ]^{\frac{1}{p}}
\end{align}
and note that
\begin{align}
    &\left |(\text{id}\otimes \pi)(f)(\text{Id}_n\otimes D^{\frac{1}{p}})\right |^p\\
    &=\left ( (\text{Id}_n\otimes D^{\frac{1}{p}})(\text{id}\otimes \pi)(f)^* (\text{id}\otimes \pi)(f) (\text{Id}_n\otimes D^{\frac{1}{p}}) \right )^{\frac{p}{2}}\\
    &=\left ( (\text{Id}_n\otimes D^{\frac{1}{p}})(\text{id}\otimes \pi)(f^*f)(\text{Id}_n\otimes D^{-\frac{1}{p}})(\text{Id}_n\otimes D^{\frac{2}{p}}) \right )^{\frac{p}{2}}\\
    &=\left ( \left (\text{id}\otimes \pi\right )\left ((\text{id}\otimes \sigma_{-\frac{i}{p}})(f^*f)\right )(\text{Id}_n\otimes D^{\frac{2}{p}}) \right )^{\frac{p}{2}}.
\end{align}
Thus, we have
\begin{align}
    \norm{f}_{L^p(\g;S^p_n)}^2=\norm{(\text{id}\otimes \sigma_{-\frac{i}{p}})(f^*f)}_{S^{\frac{p}{2}}_n[L^{\frac{p}{2}}(\g)]}
\end{align}

Since $\text{span}\left\{\chi_{\alpha}\right\}_{\alpha\in \text{Irr}(\g)}$ is a $*$-subalgebra of $L^{\infty}(\g)$, the element $f^*f=\displaystyle \sum_{\alpha_1,\alpha_2\in \text{Irr}(\g)} x_{\alpha_1}^*x_{\alpha_2}\otimes \chi_{\alpha_1}^*\chi_{\alpha_2}$ can be written as a linear combination of characters $\displaystyle \sum_{\beta\in \text{Irr}(\g)}y_{\beta}\otimes \chi_{\beta}$, where the operator coefficients are determined by 
\begin{align}
&y_{\beta}=(\text{id}\otimes h)\left ((\text{Id}_n\otimes \chi_{\beta}^*) \cdot f^*f\right )\\
&=\sum_{\alpha_1\in \text{Irr}(\g)} x_{\alpha_1}^*   \sum_{\alpha_2\in \text{Irr}(\g) } x_{\alpha_2}\cdot h( \chi_{\beta}^* \chi_{\alpha_1}^*\chi_{\alpha_2})\\
&=\sum_{\alpha_1\in \text{Irr}(\g)} x_{\alpha_1}^*   \sum_{\alpha_2\in \text{Irr}(\g) } x_{\alpha_2}\cdot h(\chi_{\alpha_1}\chi_{\beta}\chi_{\alpha_2}^*) = T^* \left ( \text{Id}_n\otimes \chi_{\beta}\right ) T
\end{align}
where $T=\sum_{\gamma\in \text{Irr}(\g) } x_{\gamma}\otimes |\chi_{\gamma}^*\ra$ with a linear map $|\chi_{\gamma}^*\ra: \Comp\rightarrow L^2(\g)$ mapping $z$ to $z\cdot  \chi_{\gamma}^*$. Then we have
\begin{align}
  \label{eq31}  \norm{y_{\beta}}_{S^{\frac{p}{2}}_n}&\leq \norm{T^*}_{S^p}\cdot \norm{\text{Id}_n\otimes \chi_{\beta}}_{M_n\overline{\otimes }L^{\infty}(\g)}\cdot \norm{T}_{S^p}\\
  \label{eq32}  &=\norm{\chi_{\beta}}_{L^{\infty}(\g)}\cdot \norm{\left (\sum_{\gamma\in \text{Irr}(\g)}x_{\gamma}^*x_{\gamma}\right )^{\frac{1}{2}}}_{S^p_n}^2.
\end{align}
Combining the above observations with the triangle inequality and Lemma \ref{lem-main}, we reach the following conclusion
\begin{align}
   & \norm{(\text{id}\otimes \sigma_{-\frac{i}{p}})(f^*f)}_{S^{\frac{p}{2}}_n[L^{\frac{p}{2}}(\g)]}\\
   &\leq \sum_{\beta\in \text{Irr}(\g)}\norm{y_{\beta}\otimes \sigma_{-\frac{i}{p}}(\chi_{\beta})}_{S^{\frac{p}{2}}_n[L^{\frac{p}{2}}(\g)]}\\
   &\leq \sum_{\beta\in \text{Irr}(\g)} \norm{\chi_{\beta}}_{L^{\infty}(\g)} \norm{\left (\sum_{\gamma\in \text{Irr}(\g)}x_{\gamma}^*x_{\gamma}\right )^{\frac{1}{2}}}_{S^p_n}^2 \cdot \norm{\chi_{\beta}}_{L^{\infty}(\g)}^{1-\frac{4}{p}}\left ( \frac{n_{\beta}}{d_{\beta}}\right )^{\frac{2}{p}}.
\end{align}

\end{proof}

\begin{remark}
Theorem \ref{thm-main} can be rephrased as that $E=\text{Irr}(\g)$ is a completely bounded $\Lambda(p)$-set under the assumption $K_p<\infty$ in a natural sense of \cite{Har99} within the framework of compact quantum groups.
\end{remark}

Then, some standard complex interpolation arguments allow us to show that all $L^p$-norms of the central Fourier series are equivalent in the scalar-valued case.

\begin{corollary}\label{cor-main}
Suppose that $K_p<\infty$ for infinitely many $p=2^k$ with natural numbers $k$. Then, for any $1\leq r,s<\infty$, we have
\begin{equation}
    \norm{f}_{L^{r}(\g)}\approx \norm{f}_{L^{s}(\g)}
\end{equation}
for arbitrary central Fourier series $f = \sum_{\alpha\in \text{Irr}(\g)}a_{\alpha}\chi_{\alpha}\in \text{Pol}(\g)$.
\end{corollary}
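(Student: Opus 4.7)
The plan is to reduce to the equivalence $\|f\|_{L^r(\g)} \approx \|f\|_{L^2(\g)}$ for every $1 \leq r < \infty$ and then combine these to get the two-sided $r$--$s$ estimate.

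First, I would specialize Theorem \ref{thm-main} to the scalar case $n=1$. With $x_\alpha = a_\alpha \in \mathbb{C}$, the Schur orthogonality relation and the fact that $\{\chi_\alpha\}_{\alpha \in \Irr(\g)}$ is an orthonormal set in $L^2(\g)$ give
\begin{equation}
\norm{\Bigl(\sum_\alpha |a_\alpha|^2\Bigr)^{1/2}}_{\mathbb{C}} = \norm{f}_{L^2(\g)}.
\end{equation}
Hence Theorem \ref{thm-main} yields $\norm{f}_{L^p(\g)} \leq K_p \cdot \norm{f}_{L^2(\g)}$ whenever $p = 2^k$ with $K_p < \infty$. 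The reverse estimate $\norm{f}_{L^2(\g)} \leq \norm{f}_{L^p(\g)}$ is automatic for $p \geq 2$ since $h$ is a state, so for each such $p$ we already have $\norm{f}_{L^p(\g)} \approx \norm{f}_{L^2(\g)}$.

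Next I would interpolate to arbitrary $r \in [1,\infty)$ using the log-convexity inequality \eqref{ineq20}. Fix any $r$ and pick $p = 2^k$ with $K_p < \infty$ and $p > \max\{2, r\}$; such a $p$ exists by hypothesis. For $r \geq 2$, write $\tfrac{1}{r} = \tfrac{\theta}{2} + \tfrac{1-\theta}{p}$ with $\theta \in (0,1]$. By \eqref{ineq20},
\begin{equation}
\norm{f}_{L^r(\g)} \leq \norm{f}_{L^2(\g)}^{\theta} \norm{f}_{L^p(\g)}^{1-\theta} \leq K_p^{1-\theta} \norm{f}_{L^2(\g)},
\end{equation}
and $\norm{f}_{L^2(\g)} \leq \norm{f}_{L^r(\g)}$ is again automatic. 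For $r < 2$ the easy direction $\norm{f}_{L^r(\g)} \leq \norm{f}_{L^2(\g)}$ is standard; for the other direction write $\tfrac{1}{2} = \tfrac{\theta}{r} + \tfrac{1-\theta}{p}$ with $\theta \in (0,1)$ and apply \eqref{ineq20} together with $\norm{f}_{L^p(\g)} \leq K_p \norm{f}_{L^2(\g)}$ to get
\begin{equation}
\norm{f}_{L^2(\g)} \leq \norm{f}_{L^r(\g)}^{\theta} \norm{f}_{L^p(\g)}^{1-\theta} \leq K_p^{1-\theta} \norm{f}_{L^r(\g)}^{\theta} \norm{f}_{L^2(\g)}^{1-\theta},
\end{equation}
which rearranges to $\norm{f}_{L^2(\g)} \leq K_p^{(1-\theta)/\theta} \norm{f}_{L^r(\g)}$.

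Combining these cases, $\norm{f}_{L^r(\g)} \approx \norm{f}_{L^2(\g)}$ for every $1 \leq r < \infty$, and then the triangle-style comparison $\norm{f}_{L^r(\g)} \approx \norm{f}_{L^s(\g)}$ follows immediately for any $1 \leq r,s < \infty$. There is no real obstacle here: the crucial analytic input is already encoded in Theorem \ref{thm-main} via the orthonormality of characters, and the extension to all $r$ is a textbook application of Riesz--Thorin-type log-convexity. The only point to handle with mild care is ensuring that the constants remain independent of $f$, which is automatic since the chosen $p$ and $\theta$ depend only on $r$.
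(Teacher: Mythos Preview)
Your proof is correct and follows essentially the same approach as the paper: specialize Theorem~\ref{thm-main} to the scalar case using orthonormality of characters, then combine the resulting $L^p$--$L^2$ bound with the log-convexity inequality~\eqref{ineq20} and a bootstrap/rearrangement argument to reach all exponents. The only cosmetic difference is that the paper pivots around $L^1$ (showing $\|f\|_{L^r}\lesssim\|f\|_{L^1}$ for all $r$) while you pivot around $L^2$; the extrapolation step is identical in both.
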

\begin{proof}

It is enough to prove that, for any $1\leq r<\infty$, the following inequality
\begin{equation}
   \norm{f}_{L^r(\g)}\lesssim \norm{f}_{L^1(\g)}
\end{equation}
holds for all central Fourier series $f= \sum_{\alpha\in \text{Irr}(\g)}a_{\alpha} \chi_{\alpha}\in \text{Pol}(\g)$.

First of all, suppose that $K_p<\infty$ for $p=2^k$ with a natural number $k\geq 2$. Then there exists $\theta\in (0,1)$ such that $\frac{1}{2}=\frac{1-\theta}{1}+\frac{\theta}{p}$ ($\Leftrightarrow \theta=\frac{p}{2(p-1)}$), and we have
\begin{align}
\norm{f}_{L^2(\g)} & \leq \norm{f}_{L^1(\g)}^{1-\theta}\norm{f}_{L^p(\g)}^{\theta}\\
&\leq \norm{f}_{L^1(\g)}^{1-\theta}\cdot K_p^{\theta} \norm{f}_{L^2(\g)}^{\theta}
\end{align}
by \eqref{ineq20} and Theorem \ref{thm-main}. This implies 
\begin{equation}\label{eq33}
\norm{f}_{L^2(\g)}\leq K_p^{\frac{\theta}{1-\theta}}\norm{f}_{L^1(\g)}=K_p^{\frac{p}{p-2}}\norm{f}_{L^1(\g)}.
\end{equation}
Combining \eqref{eq33} with Theorem \ref{thm-main}, we can conclude that
\begin{equation}\label{eq30}
    \norm{f}_{L^p(\g)}\leq K_p\norm{f}_{L^2(\g)}\leq K_p^{\frac{2p-2}{p-2}}\norm{f}_{L^1(\g)}.
\end{equation}

For the general cases $1\leq r<\infty$, there exists $p=2^k$ with a natural number $k\geq 2$ such that $r<2^k$ and $K_p<\infty$. Let us write $\frac{1}{r}=\frac{1-\theta'}{1}+\frac{\theta'}{p}$ ($\Leftrightarrow \theta'=\frac{p(r-1)}{r(p-1)}$) with $\theta'\in [0,1)$. Then, by \eqref{ineq20} and \eqref{eq30}, we obtain
\begin{equation}
    \norm{f}_{L^r(\g)}\leq \norm{f}_{L^1(\g)}^{1-\theta'}\norm{f}_{L^p(\g)}^{\theta'}=K_p^{\frac{2p(r-1)}{r(p-2)}}\norm{f}_{L^1(\g)}.
\end{equation}

\end{proof}

\section{Main examples}

In this Section, we demonstrate that the Khintchine inequality with operator coefficients holds for arbitrary central Fourier series in a large class of non-Kac compact quantum groups. More precisely, we will prove that
\begin{align}
    \label{eq44} K_p^2=\sum_{\alpha\in \text{Irr}(\g)}  \norm{\chi_{\alpha}}_{L^{\infty}(\g)}^{2-\frac{4}{p}}  \left ( \frac{n_{\alpha}}{d_{\alpha}} \right )^{\frac{2}{p}}<\infty
\end{align}
holds for all $p>2$ when $\g$ is one of the following compact quantum groups
\begin{itemize}
    \item Drinfeld-Jimbo $q$-deformations $G_q$ with $0<q<1$,
    \item Non-Kac free orthogonal quantum groups $O_F^+$,
    \item Non-Kac quantum automorphism group $\g_{\text{aut}}(B,\psi)$ with a $\delta$-form.
\end{itemize}

Our unified strategy to cover all the aforementioned examples is as follows. In Subsections 4.1, 4.2 and 4.3, we will prove that there exists $r\in (0,1)$ such that 
\begin{equation}
\frac{n_{\alpha}}{d_{\alpha}}\lesssim r^{|\alpha|}    
\end{equation}
where $|\cdot|:\text{Irr}(\g)\rightarrow \n$ is a natural length function for each $\g$ in the above list. Assuming the existence of such $r\in (0,1)$, we can obtain
\begin{align}
    K_p^2=&\sum_{k=0}^{\infty}\sum_{\alpha\in \text{Irr}(\g): |\alpha|=k}\norm{\chi_{\alpha}}_{L^{\infty}(\g)}^{2-\frac{4}{p}}  \left ( \frac{n_{\alpha}}{d_{\alpha}} \right )^{\frac{2}{p}}\\
    & \lesssim \sum_{k=0}^{\infty} \left ( \sum_{\alpha\in \text{Irr}(\g): |\alpha|=k}\norm{\chi_{\alpha}}_{L^{\infty}(\g)}^{2}  \right ) r^{\frac{2k}{p}},
\end{align}
so it is enough for us to see that $\displaystyle \sum_{\alpha\in \text{Irr}(\g): |\alpha|=k}\norm{\chi_{\alpha}}_{L^{\infty}(\g)}^{2} $
grows polynomially when $k$ increases.

(1) For the Drinfeld-Jimbo $q$-deformations $\g=G_q$, we have  
\begin{equation}
   \sum_{\alpha\in \text{Irr}(\g): |\alpha|=k}\norm{\chi_{\alpha}}_{L^{\infty}(\g)}^{2}= \sum_{\alpha\in \text{Irr}(\g): |\alpha|=k}n_{\alpha}^2,
\end{equation}
thanks to coamenability of $G_q$, and the right-hand side grows polynomially for $k$ as in the classical situation by \cite[Theorem 2.4.7 (3)]{NeTu13} and \cite{BaVe09}. 

(2) For the other cases $\g=O_F^+$ (resp. $\g=\g_{\text{aut}}(B,\psi)$), let us consider $G=SU(2)$ (resp. $G=SO(3)$). Then $\text{Irr}(\g)\cong \text{Irr}(G)\cong \n_0$ with the same fusion rules by \cite{Ba96} (resp. \cite{Br13}). For each character $\chi_{k}\in \text{Pol}(\g)$, let us denote by $\chi_{k}'\in \text{Pol}(G)$ the corresponding character. Then we have
\begin{equation}
   \norm{\chi_{k}}_{L^{\infty}(\g)}=\norm{\chi_{k}'}_{L^{\infty}(G)}=k+1~(\text{resp. }2k+1),
\end{equation}
by \cite[Proposition 6.7]{Wa17} and \cite{Ba96} (resp. \cite{Ba99b}), so we can conclude that
\begin{align}
    \sum_{\alpha\in \text{Irr}(\g): |\alpha|=k}\norm{\chi_{\alpha}}_{L^{\infty}(\g)}^{2}=\norm{\chi_k}_{L^{\infty}(\g)}^2\lesssim (k+1)^2 .
\end{align}

Hence, the only remaining thing to prove is the existence of $r\in (0,1)$ satisfying $\displaystyle \frac{n_{\alpha}}{d_{\alpha}}\lesssim r^{|\alpha|}$, which will be discussed in the following Subsections 4.1, 4.2, and 4.3 for each class of examples.

\subsection{Drinfeld-Jimbo $q$-deformations}

Let $G$ be a simply connected compact semisimple Lie group and let $\mathfrak{g}$ be the associated complexified Lie algebra with a Cartan subalgebra $\mathfrak{h}$. The {\it adjoint map} $\text{ad}:\mathfrak{g}\rightarrow \text{End}(\mathfrak{g})$ is given by
\begin{equation}
    \text{ad}_X(Y)=[X,Y]
\end{equation}
for all $X,Y\in \mathfrak{g}$. A non-zero linear functional $\alpha\in \mathfrak{h}^*$ is called a {\it root} if there exists non-zero $X\in \mathfrak{g}$ such that
\begin{equation}
    \text{ad}_H(X)=[H,X]=\alpha(H)X
\end{equation}
for all $H\in \mathfrak{h}$. The space of all roots $\alpha$ is denoted by $\Phi\subseteq \mathfrak{h}^*\setminus \left\{0\right\}$, and the associated non-trivial eigenspaces
\begin{equation}
    \mathfrak{g}_{\alpha}=\left\{X\in \mathfrak{g}: \forall H\in \mathfrak{h},~\text{ad}_H(X)=\alpha(H)X\right\}
\end{equation}
are called {\it root spaces}. Furthermore, there exists a subset $\Phi^+$ of $\Phi$ satisfying the following:
\begin{itemize}
    \item The subset $\Phi^+$ contains exactly one of $\alpha$ and $-\alpha$ for each $\alpha\in \Phi$.
    \item If $\alpha,\beta\in \Phi^+$ $(\alpha\neq \beta)$ satisfies $\alpha+\beta\in \Phi$, then $\alpha+\beta\in \Phi^+$.
\end{itemize}
All elements in $\Phi^+$ are called {\it positive roots}, and a positive root is called {\it simple} if it cannot be written as the sum of two positive roots. Let us write the simple roots as $\alpha_1,\alpha_2,\cdots, \alpha_r$. Then every positive root $\alpha\in \Phi^+$ is a linear combination of simple roots with non-negative integral coefficients.

There exists a non-degenerate symmetric $\text {ad}$-invariant form $(\cdot,\cdot)$ on $\mathfrak{g}$ whose restriction to the real Lie algebra is negative definite. Note that the non-degenerate symmetric form $(\cdot,\cdot)$ provides a natural identification $\mathfrak{h}^*\cong \mathfrak{h}$, so it induces a natural non-degenerate symmetric form on $\mathfrak{h}^*$. Furthermore, the form $(\cdot,\cdot)$ restricts to an inner product on $\mathfrak{h}_0^*=\text{span}_{\mathbb{R}}\Phi$. Another important basis of $\mathfrak{h}_0^*$ consists of the so-called {\it fundamental weights} $\omega_1,\omega_2,\cdots,\omega_r\in \mathfrak{h}_0^*$ determined by the following relations 
\begin{equation}
    \frac{2(\omega_i,\alpha_j)}{(\alpha_j,\alpha_j)}=\delta_{ij}
\end{equation}
for all $1\leq i,j\leq r$. The set of all $\z$-linear combinations of the fundamental weights is called the {\it weight lattice} for $\mathfrak{g}$, which we denote by $P$. The subset of all $\n_0$-linear combinations of the fundamental weights is denoted by $P_+$.

Now, let $0<q<1$ and consider the Drinfeld-Jimbo $q$-deformation $G_q$. Then $\text{Irr}(G_q)$ is identified with $P_+$. See \cite[Section 10.1]{ChPr95} and \cite[Theorem 2.4.7]{NeTu13}) for more details. Let us denote by $\rho=\sum_{i=1}^r \omega_i$, which is equal to $\frac{1}{2}\sum_{\alpha\in \Phi^+}\alpha$ (\cite[Proposition 2.69]{Kn02} and \cite{NeTu13}). The following proposition is important to establish the Khintchine inequality with operator coefficients for arbitrary central Fourier series on $G_q$.

\begin{proposition}
Let $t_i=q^{(\omega_i,2\rho)}$ for all $1\leq i\leq r$. Then
\begin{equation}
    \norm{Q_{\mu}}_{\infty} = t_1^{-\mu_1}t_2^{-\mu_2}\cdots t_r^{-\mu_r}
\end{equation}
for all $\mu=\sum_{i=1}^r \mu_i \omega_i \in P_+\cong \text{Irr}(G_q)$. In particular, $\frac{n_{\mu}}{d_{\mu}}$ decreases exponentially as $\sum_{i=1}^r \mu_i\rightarrow \infty$.
\end{proposition}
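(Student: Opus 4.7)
The plan is to invoke the explicit diagonal form of $Q_\mu$ on each irreducible $G_q$-representation $V_\mu$ coming from the Cartan action of $K_{2\rho}$, and then read off its operator norm from the weight decomposition. Concretely, I would recall from \cite[Section 2.4]{NeTu13} (or \cite[Section 10.1]{ChPr95}) that $V_\mu$ admits a weight-space decomposition $V_\mu=\bigoplus_\nu V_\mu(\nu)$ indexed by the weights $\nu$ of $V_\mu$ counted with multiplicity, and that in a basis of weight vectors the normalized matrix $Q_\mu$ (the one satisfying $\text{Tr}(Q_\mu)=\text{Tr}(Q_\mu^{-1})$) is diagonal with eigenvalue $q^{-(\nu,2\rho)}$ on the $\nu$-weight piece. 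The trace symmetry is automatic because $w_0(\rho)=-\rho$, so the Weyl-invariant multiset $\{(\nu,2\rho)\}_\nu$ is symmetric under $x\mapsto -x$.

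With this description in hand, since $0<q<1$, the largest eigenvalue of $Q_\mu$ is attained at the weight maximizing $(\nu,2\rho)$. The standard fact that every weight of $V_\mu$ lies in the convex hull of the Weyl orbit $\{w(\mu):w\in W\}$, combined with the strict dominance of $\rho$, pins this maximum at $\nu=\mu$ with value $(\mu,2\rho)$. Expanding $\mu=\sum_i\mu_i\omega_i$ in fundamental weights then yields
\begin{equation*}
\|Q_\mu\|_\infty \;=\; q^{-(\mu,2\rho)}\;=\;\prod_{i=1}^{r}q^{-\mu_i(\omega_i,2\rho)}\;=\;\prod_{i=1}^{r} t_i^{-\mu_i},
\end{equation*}
which is the asserted formula.

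For the ``in particular'' statement, the lower bound $d_\mu=\text{Tr}(Q_\mu)\geq \|Q_\mu\|_\infty=\prod_i t_i^{-\mu_i}$ gives $n_\mu/d_\mu\leq n_\mu\prod_i t_i^{\mu_i}$. Since $\rho$ is strictly dominant, each $(\omega_i,2\rho)>0$, so every $t_i\in(0,1)$, and writing $t_{\max}=\max_i t_i<1$ one obtains $\prod_i t_i^{\mu_i}\leq t_{\max}^{\sum_i\mu_i}$. Combined with the polynomial growth of $n_\mu=\dim V_\mu$ in the entries $\mu_i$ supplied by the Weyl dimension formula, this delivers the required exponential decay of $n_\mu/d_\mu$ as $\sum_i\mu_i\to\infty$, with rate controlled by $t_{\max}$.

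The only delicate point I foresee is a convention check: depending on the source, $Q_\mu$ is realized as either $\pi_\mu(K_{2\rho})$ or $\pi_\mu(K_{-2\rho})$, giving diagonal entries $q^{\pm(\nu,2\rho)}$. However, the symmetry of the weight multiset together with the trace normalization and the constraint $0<q<1$ pins the final value of $\|Q_\mu\|_\infty$ to $q^{-(\mu,2\rho)}$ under either convention, so the only genuine task is to verify that the $Q_\mu$ of Section 2 matches one of these standard choices. Once that is settled, no further input is needed beyond routine structure theory of $U_q(\mathfrak{g})$-modules.
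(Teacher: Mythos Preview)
Your proposal is correct and follows essentially the same route as the paper: both identify the eigenvalues of $Q_\mu$ as $q^{-(\nu,2\rho)}$ over the weights $\nu$ of $V_\mu$ (citing \cite{NeTu13}), argue the maximum occurs at the highest weight $\mu$, and then bound $d_\mu\ge\|Q_\mu\|_\infty$ together with polynomial growth of $n_\mu$ to obtain exponential decay of $n_\mu/d_\mu$. The only cosmetic difference is that the paper verifies the maximum directly via $\mu-\nu=\sum_i m_i\alpha_i$ with $m_i\ge0$ and $(\alpha_i,2\rho)=(\alpha_i,\alpha_i)>0$, whereas you invoke the convex-hull description of the weight set and dominance of $\rho$; these are equivalent pieces of standard structure theory.
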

\begin{proof}
For the associated irreducible representation $u^{\mu}$ of $G_q$, the eigenvalues of the modular matrix $Q_{\mu}$ are of the form $q^{(\nu,-2\rho)}$ where $\nu$ is a weight for $u^{\mu}$ (\cite[Proposition 2.4.10]{NeTu13} and \cite{VoYu20}). Let us prove that $(\nu,2\rho)$ is maximized when $\nu$ is the highest weight $\mu$. Indeed, since $\mu$ is the highest weight for $u^{\mu}$, the difference $\mu-\nu$ can be written as $\sum_{i=1}^r m_i \alpha_i$ with $m_i\geq 0$ and we have
\begin{align}
   & (\mu,2\rho)-(\nu,2\rho)=\sum_{i,j=1}^r 2m_i (\alpha_i, \omega_j)=\sum_{i=1}^r m_i(\alpha_i,\alpha_i) \geq 0.
\end{align}
Thus, we can conclude that
\begin{align}
&\norm{Q_{\mu}}_{\infty}= q^{-(\mu,2\rho)}=q^{-\sum_{i=1}^r \mu_i  (\omega_i,2\rho)}=\prod_{i=1}^r \left ( q^{(\omega_i,2\rho )} \right )^{-\mu_i}.
\end{align}

For the last conclusion, since $n_{\mu}$ is polynomially growing by \cite[Theorem 2.4.7 (3)]{NeTu13} and \cite[Theorem 2.1]{BaVe09}, it is enough to see that $d_{\mu}$ is exponentially growing as $\sum_{i=1}^r \mu_i\rightarrow \infty$. Note that  
\begin{equation}
t_i=q^{(\omega_i,2\rho)}=q^{\sum_{\alpha\in \Phi^+} (\omega_i,\alpha)}\leq q^{(\omega_i,\alpha_i)}=q^{\frac{(\alpha_i,\alpha_i)}{2}}<1    
\end{equation}
for all $1\leq i\leq r$, implying $\displaystyle t=\max_{1\leq i\leq r}t_i\in (0,1)$. Thus, we obtain the following exponentially growing lower bound:
\begin{equation}
d_{\mu}\geq \norm{Q_{\mu}}_{\infty}\geq t^{-(\mu_1+\mu_2+\cdots+\mu_r)}.
\end{equation}

\end{proof}


\subsection{Free orthogonal quantum groups}

Let $O_F^+$ be a non-Kac free orthogonal quantum group with $F\in M_N$ such that $N_q=\text{Tr}(F^*F)>N$. See \cite{Wa95,Ba96,Ti08} for more details. Since Subsection 4.1 covers the case $N=2$, let us focus on the other general cases $N\geq 3$. Recall that the space $\text{Irr}(O_F^+)$ is identified with $\n_0$ with the following fusion rule \cite{Ba96}:
\begin{equation}
 u^{(k)}\tp u^{(1)}\cong u^{(k-1)}\oplus u^{(k+1)},~k\in \n.
\end{equation}

Both the quantum dimension $d_k$ and the classical dimension $n_k$ are described by the {\it Chebyshev polynomials} $(f_k)_{k\in \n_0}$ of the second kind. The polynomial $f_k$ is explicitly given by 
\begin{equation}\label{eq-chebyshev}
    f_k(t)=
    \left ( \frac{1}{2} \right )^{k+1}\frac{(t+\sqrt{t^2-4})^{k+1}-(t-\sqrt{t^2-4})^{k+1}}{\sqrt{t^2-4}}  
\end{equation}
for all $t>2$, and we have $d_k=f_k(N_q)$ and $n_k=f_k(N)$. Combining these facts with the following identity
\begin{equation}\label{eq-limit}
    \lim_{k\rightarrow \infty}f_k(t)\cdot \left ( \frac{t+\sqrt{t^2-4}}{2} \right )^{-(k+1)}=\frac{1}{\sqrt{t^2-4}},
\end{equation}
we obtain the following estimate
\begin{align}
    \frac{n_k}{d_k}\approx \left ( \frac{N+\sqrt{N^2-4}}{N_q+\sqrt{N_q^2-4}} \right )^{k+1}.
\end{align}
Since $N<N_q$ is equivalent to $ \frac{N+\sqrt{N^2-4}}{N_q+\sqrt{N_q^2-4}}<1$, we can conclude that $ \frac{n_k}{d_k}$ decreases exponentially as $k\rightarrow \infty$.

\subsection{Quantum automorphism group}

Let $\g$ be a non-Kac quantum automorphism group $\g_{\text{aut}}(B,\psi)$. We refer the readers to \cite{Wa98,Ba99b,Ba02,Br13} for more details of $\g_{\text{aut}}(B,\psi)$ with a $\delta$-form $\psi$. By \cite{Ba99b,Br13}, the space $\text{Irr}(\g_{\text{aut}}(B,\psi))$ is identified with $\n_0$ with the following fusion rule:
\begin{equation}
u^{(k)}\tp u^{(1)}\cong u^{(k-1)}\oplus u^{(k)}\oplus u^{(k+1)},~k\in \n.
\end{equation}

To compute the quantum dimension and the classical dimension, let us consider the following polynomials $g_k$ such that $g_k(4)=2k+1$ and $g_k(x)=f_{2k}(\sqrt{x})$ for all $x>4$. Here, $f_{2k}$ is the $2k$-th Chebyshev polynomial given in \eqref{eq-chebyshev}, and the identity \eqref{eq-limit} implies
\begin{equation}\label{eq40}
    \lim_{k\rightarrow \infty}g_k(x)\cdot \left ( \frac{x-2+\sqrt{x(x-4)}}{2} \right )^{-k}=\frac{\sqrt{x}+\sqrt{x-4}}{2\sqrt{x-4}}
\end{equation}
for all $x>4$. Note that $d_k=g_k(d_1)=f_{2k}(\sqrt{d_1})$ and $n_k=g_k(n_1)=f_{2k}(\sqrt{n_1})$ respectively, by \cite[Theorem 3.8]{Br13}.

Firstly, if $\text{dim}(B)=4$, then $n_k=g_k(4)=2k+1$ grows polynomially and the above identity \eqref{eq40} implies 
\begin{equation}
d_k\approx  \left ( \frac{d_1-2+\sqrt{d_1(d_1-4)}}{2} \right )^k.
\end{equation}
Then, the non-Kac condition $d_1>n_1=4$ implies $\frac{d_1-2+\sqrt{d_1(d_1-4)}}{2}>1$, so the quantum dimension $d_k$ has an exponential growth.

For the general cases $\text{dim}(B)\geq 5$, we have
\begin{align}
    \frac{n_k}{d_k} \approx \left ( \frac{n_1-2+\sqrt{n_1(n_1-4)}}{d_1-2+\sqrt{d_1(d_1-4)}} \right )^{k}
\end{align}
up to constants by \eqref{eq40}, and the non-Kac condition, i.e., $n_1<d_1$, implies that the right-hand side decreases exponentially.

\newpage 

\emph{Acknowledgements}: The author thanks Professor {\'E}ric Ricard for his insightful comments, particularly regarding the consideration of operator coefficients. The author was supported by the National Research Foundation of Korea (NRF) grant funded by the Ministry of Science and ICT (MSIT) (No. 2020R1C1C1A01009681), and also by Samsung Science and Technology Foundation under Project Number SSTF-BA2002-01.

\bibliographystyle{alpha}
\bibliography{Youn23}

\end{document}